\newtheorem{thm}{Theorem}[]
\newtheorem{cor}[thm]{Corollary}
\newtheorem{lem}[thm]{Lemma}
\newtheorem{prop}[thm]{Proposition}
\newtheorem{exam}[thm]{Example}
\theoremstyle{definition}
\newtheorem{defi}[thm]{Definition}
\theoremstyle{remark}
\newtheorem{rem}[thm]{Remark}
\newcommand*{\cat}[1]{\mathcal{#1}}
\newcommand*{\set}{\operatorname*{set}}
\newcommand*{\obj}[1]{#1_0}
\newcommand*{\Hom}[3]{\operatorname{Hom}_{#1} \left(#2, #3\right)}
\newcommand*{\op}[1]{#1^{op}}
\newcommand*{\CSet}[1]{#1-{\set}}
\newcommand*{\orbit}[3]{\cat{#1}_w^{#2} \cdot #3}
\newcommand*{\gen}[3]{\cat{#1}^{#2} \cdot #3}
\newcommand*{\biset}[3]{{}_{#1}{#2}_{#3}}
\newcommand*{\comp}[6]{\biset{#1}{#2}{#3} \times_{#3} \biset{#4}{#5}{#6}}
\newcommand*{\ZZ}{\mathbb{Z}}
\newcommand*{\NN}{\mathbb{N}}
\title{Characterization of groupoid categories in terms of its category of $\cat{C}$-sets}
\author[1]{J. Miguel Calder\'on \orcidlink{0009-0007-4685-8837}}
\author[2]{Alberto G. Raggi-C\'ardenas \orcidlink{0000-0003-1720-1733}}
\author[3]{Itzel Rosas \orcidlink{0009-0002-3411-8456}}
\author[4]{Ram\'on H. Ruiz-Medina \orcidlink{0000-0003-2916-9160}}
\date{}
\affil[1]{Centro de Ciencias Matem\'aticas.
	Universidad Nacional Aut\'onoma de M\'exico.
	Morelia, Michoac\'an 58089  \\ \texttt{calderonl@matmor.unam.mx}}
\affil[2]{Centro de Ciencias Matem\'aticas.
	Universidad Nacional Aut\'onoma de M\'exico.
	Morelia, Michoac\'an 58089  \\ \texttt{agraggi@gmail.com}}
\affil[3]{PCCM, UNAM -- UMSNH.
	Morelia, Michoac\'an\\ \texttt{irosas@matmor.unam.mx}}
\affil[4]{Centro de Ciencias Matem\'aticas.
	Universidad Nacional Aut\'onoma de M\'exico.
	Morelia, Michoac\'an 58089  \\ \texttt{harath@matmor.unam.mx}}
\begin{document}

\maketitle

\begin{abstract}
    A $\cat{C}$-set is a functor from the category $\cat{C}$ to the category of finite sets and functions. The category of $\cat{C}$-sets, $\CSet{\cat{C}}$, is defined as the category whose objects are $\cat{C}$-sets, and whose morphisms are natural transformations between them. In this document we provide some concise characterizations of groupoids in terms of their category of $\cat{C}$-sets. \\
	\textbf{Keywords:} Groupoid, $\cat{C}$-set, indecomposable object, Burnside ring. \\
	\textbf{MSC 2020:} 18A25, 18B40, 19A22.
\end{abstract}


\section{Introduction}

%
%
%

Group actions are a fundamental tool in understanding algebraic structures, with applications in symmetry, combinatorics, and representation theory. Classical results --such as Burnside's lemma and the study of the Burnside ring \cite{tomDieck}-- illustrate how actions encode essential information about finite groups. This framework also appears in diverse areas, including geometric invariants and permutation representations \cite{DummitFoote}. However, as mathematical structures increase in complexity, so does the need for generalizations that preserve their underlying categorical nature.

Category theory provides a natural setting for such generalizations. By formalizing structure-preserving transformations, categories extend the concept of groups: morphisms replace group elements, and composition generalizes multiplication \cite{MacLane}. This shift in perspective has proven useful in various areas, including algebraic topology, homological algebra, and even theoretical computer science \cite{Simmons}. In particular, the transition from groups to categories invites a parallel extension of actions: just as groups act on sets, categories can ``act'' via functors to sets, providing a broader framework for analyzing structure.

In this work, we explore this idea by generalizing group actions to category actions, building on the approach introduced by P. Webb \cite{Webb23} in the context of biset functors. We focus instead on the internal properties of category actions themselves. In this setting, we consider $\cat{C}$-sets --functors from a category $\cat{C}$ to $\set$-- as a natural extension of the notion of $G$-sets. These functors can be thought of as category actions. As shown in \cite{Rosas}, this framework enables richer descriptions of certain categorical structures --particularly groupoids-- through the behavior of their associated $\cat{C}$-sets. Here, we provide a characterization of groupoids in terms of their categories of $\cat{C}$-sets, bridging combinatorial and categorical viewpoints.

The main objective of this article is to prove that, if $\cat{C}$ is a category of finite type --that is, the number of indecomposable $\cat{C}$-sets, up to isomorphism, is finite-- and, in addition, $\cat{C}$ is finite and connected, then the following theorem holds. 

\begin{thm}\label{t1}
	Let $\cat{C}$ be a finite, connected category. Then the following statements are equivalent:
	\begin{enumerate}
		\item[i)] $\cat{C}$ is semisimple.
		\item[ii)] $\cat{C}$ is of finite type.
		\item[iii)] $\cat{C}$ is a groupoid.
		\item[iv)] For all $\cat{C}$-set $\Omega: \cat{C} \longrightarrow \set$, all object $x \in \obj{\cat{C}}$, and all element $a \in \Omega(x)$, we have: \[
		\cat{C}^{\Omega}\cdot a = \cat{C}_{w}^{\Omega}\cdot a.
		\]
	\end{enumerate}
\end{thm}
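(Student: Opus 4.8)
The plan is to establish the four statements as equivalent by proving the cycle (i) $\Rightarrow$ (ii) $\Rightarrow$ (iii) $\Rightarrow$ (i) together with the separate equivalence (iii) $\Leftrightarrow$ (iv). The implications that \emph{leave} statement (iii) are the soft ones, and I would dispatch them first; the genuine content lies in passing from a finiteness hypothesis back to the groupoid property, that is (ii) $\Rightarrow$ (iii), which the introduction already singles out as the main objective.

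For the soft implications I would argue as follows. For (iii) $\Rightarrow$ (iv), since every morphism $\phi$ in a groupoid is invertible, each structure map $\Omega(\phi)$ is a bijection; hence any backward step $c=\Omega(\phi)(c')$ in a zig-zag connecting two elements rewrites as the forward step $c'=\Omega(\phi^{-1})(c)$, collapsing the weak orbit onto the generated sub-$\cat{C}$-set, so $\cat{C}_w^{\Omega}\cdot a \subseteq \cat{C}^{\Omega}\cdot a$ (the inclusion $\cat{C}^{\Omega}\cdot a \subseteq \cat{C}_w^{\Omega}\cdot a$ being automatic). For (iii) $\Rightarrow$ (i) I would run a Maschke-type argument: as every $\Omega(\phi)$ is a bijection, the pointwise complement of any sub-$\cat{C}$-set $\Theta\subseteq\Omega$ is again a sub-$\cat{C}$-set, so $\Omega=\Theta\sqcup(\Omega\setminus\Theta)$ and every subobject is a direct summand, which is the defining splitting property of semisimplicity. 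For the reverse arrow (iv) $\Rightarrow$ (iii) I would test condition (iv) on the representable $\cat{C}$-set $h^{x}=\Hom{\cat{C}}{x}{-}$ at the element $\mathrm{id}_x$: given any $\phi:x\to y$ one has $\phi = h^{x}(\phi)(\mathrm{id}_x)$, so $\phi\in\cat{C}^{h^x}\cdot\mathrm{id}_x$ and hence $\mathrm{id}_x$ lies in the weak orbit of $\phi$; by (iv) it then lies in $\cat{C}^{h^x}\cdot\phi$, producing $\psi:y\to x$ with $\psi\phi=\mathrm{id}_x$. Applying the same to $\psi$ yields $\chi:x\to y$ with $\chi\psi=\mathrm{id}_y$, and associativity forces $\chi=(\chi\psi)\phi=\chi(\psi\phi)=\phi$, so $\phi\psi=\mathrm{id}_y$ and $\phi$ is invertible.

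The core step is (ii) $\Rightarrow$ (iii), which I would prove in contrapositive form: if the finite connected category $\cat{C}$ is not a groupoid, then it admits infinitely many pairwise non-isomorphic indecomposable $\cat{C}$-sets, hence is not of finite type. First I would reduce the failure of the groupoid property to a concrete obstruction. Writing $x\preceq y$ when $\Hom{\cat{C}}{x}{y}\neq\emptyset$ gives a preorder on $\obj{\cat{C}}$, and a short computation shows that if this preorder is symmetric and every $\End{\cat{C}}{x}$ is a group, then $\cat{C}$ is a groupoid: for $\phi:x\to y$ choose $\psi:y\to x$, invert the endomorphisms $\psi\phi$ and $\phi\psi$ to manufacture one-sided inverses of $\phi$, and conclude $\phi$ is iso. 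Thus a non-groupoid forces either (a) some $\End{\cat{C}}{x}$ is a finite monoid that is not a group, or (b) the preorder $\preceq$ is not symmetric. In case (b) there is $f:x\to y$ with no morphism $y\to x$; imitating the arrow category, I would let $\Omega_n$ place $n$ distinct elements over $x$, collapse them all onto a single element over $y$ along $f$, and extend freely elsewhere, the absence of a return morphism guaranteeing the collapses cannot be undone. In case (a) a finite non-group monoid contains a non-invertible element whose powers reach an idempotent $\epsilon\neq\mathrm{id}_x$, and iterating $\epsilon$ produces chains of unbounded length. In both cases the points to verify are that each $\Omega_n$ is indecomposable and that $\Omega_n\not\cong\Omega_m$ for $n\neq m$, typically by exhibiting a numerical invariant (such as a fibre cardinality) preserved by isomorphism but growing with $n$.

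Finally I would route (i) into the cycle via (i) $\Rightarrow$ (ii): from semisimplicity every $\cat{C}$-set is a coproduct of simple $\cat{C}$-sets, so the indecomposables are exactly the simples, and I would bound their number using the finiteness and connectedness of $\cat{C}$, each simple being determined up to isomorphism by bounded local data. The hardest part, absorbing most of the work, is the construction in (ii) $\Rightarrow$ (iii): producing the family $\{\Omega_n\}$ uniformly for an arbitrary finite connected non-groupoid, and in particular giving a clean \emph{functorial} definition of each $\Omega_n$ on all of $\cat{C}$ rather than only near the witnessing morphism, together with rigorous proofs of indecomposability and pairwise non-isomorphism. The representable-functor and Maschke arguments, by contrast, are short and robust.
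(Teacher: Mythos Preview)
Your soft implications are sound and, where they diverge from the paper, do so harmlessly: the paper routes (iv) $\Rightarrow$ (i) through the characterization of simples (Theorem~\ref{simples}), whereas you prove (iv) $\Rightarrow$ (iii) directly by testing on representables and separately get (iii) $\Rightarrow$ (i) by the Maschke-style complement argument. Both detours are valid and of comparable length.

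The substantive divergence is in (ii) $\Rightarrow$ (iii). You propose to build an infinite family of indecomposable $\cat{C}$-sets $\Omega_n$ \emph{directly on} $\cat{C}$, splitting on whether an endomorphism monoid fails to be a group or the reachability preorder fails to be symmetric, and you correctly flag that giving a functorial definition of $\Omega_n$ on all of $\cat{C}$ (not just near the offending morphism) and then verifying indecomposability is the crux. The paper sidesteps exactly this difficulty by an induction/restriction trick. Given a non-isomorphism $\alpha:x\to y$ (without loss of generality lacking a left inverse), it passes to the subcategory $\cat{D}$ generated by $\alpha$---either the arrow quiver of Example~\ref{Omega_n} or the loop of Example~\ref{Omega_n2}---where the indecomposables $\Omega_n$ are already explicit and trivially understood. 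The adjunction of Theorem~\ref{adj} supplies a unit $\eta:\Omega_n\to ri(\Omega_n)$, and a short walk computation (Lemma~\ref{lemw}, using only that $\alpha$ is not a retraction) shows $\eta$ is componentwise injective. Now if $\cat{C}$ had only finitely many indecomposables $\Psi_1,\dots,\Psi_k$, decomposing $ri(\Omega_n)$ into copies of the $\Psi_i$ and restricting back to $\cat{D}$ would embed each $\Omega_n$ into some $\Psi_{i_n}\vert_{\cat{D}}$; but the sizes $|\Psi_i\vert_{\cat{D}}|$ are uniformly bounded, contradicting $|\Omega_n|\to\infty$. Thus the paper never needs a global description of an indecomposable on $\cat{C}$: induction manufactures one automatically, and the only delicate point is injectivity of the unit. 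Your direct-construction plan may be completable, but as written it leaves precisely the hard steps (``extend freely elsewhere'', ``iterating $\epsilon$ produces chains of unbounded length'') as slogans rather than constructions; the adjunction device is the idea you are missing.
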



\section{Preliminaries}

Throughout this work, we consider finite categories --those whose classes of objects and morphisms are finite sets.\\

\begin{defi}
	Let $\cat{C}$ be a category. A \textit{$\cat{C}$-set} is a covariant functor from the category $\cat{C}$ to the category $\set$ of finite sets and functions.
\end{defi}

For notational convenience, rather than writing $\Omega(\alpha)(u)$ to denote the image of an element $u \in \Omega(x)$ under the induced map, we abbreviate it as $\alpha u$.

A $\cat{C}$-set $\Omega: \cat{C} \longrightarrow \set$ is \textit{finite} if, for all $x \in \obj{\cat{C}}$, the set $\Omega(x)$ is finite. The $\cat{C}$-sets form a category $\CSet{\cat{C}}$, whose objects are finite $\cat{C}$-sets and whose morphisms are natural transformations between them.

Note that when $\cat{C}$ is the delooping of a group $G$ (that is, a category with a single object whose morphisms are the elements of the group), the category of $\cat{C}$-sets is isomorphic to the category of $G$-sets.

\begin{exam}
	Let $\cat{D}$ be the following quiver:
	\begin{center}
		\begin{tikzcd}
			\cat{D} = & x \arrow{r}{\alpha} & y
		\end{tikzcd}
	\end{center}
	and consider it as category.
	
	A $\cat{D}$-set $\Omega: \cat{D} \longrightarrow \set$ can be seen as a triple $\Omega = (X, Y, f)$ where $X = \Omega(x)$, $Y = \Omega(y)$ and $f: X \longrightarrow Y$ is such that $f = \Omega(\alpha)$.
\end{exam}

Let $\Omega$ be a $\cat{C}$-set. We say that $\Omega_1$ is a \textit{$\cat{C}$-subset} of $\Omega$ if $\Omega_1$ is a subfunctor of $\Omega$. We have the following characterization:

\begin{prop}[\cite{Rosas}, Proposición 1.14, (1)]
	Let $\Omega$ be a $\cat{C}$-set. $\Omega_1$ is a $\cat{C}$-subset of $\Omega$ if and only if \[
		\Omega_1(x) \subseteq \Omega(x)
	\] for every object $x$ of $\cat{C}$ and, for every $\alpha \in \Hom{\cat{C}}{x}{y}$, \[
		\Omega(\alpha) \left(\Omega_1(x)\right) \subseteq \Omega_1(y).
	\]
\end{prop}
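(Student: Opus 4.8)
The plan is to unwind the definition of a subfunctor and show that it is equivalent to the two pointwise conditions in the statement. Recall that $\Omega_1$ being a subfunctor of $\Omega$ means that $\Omega_1$ is itself a $\cat{C}$-set, that $\Omega_1(x) \subseteq \Omega(x)$ for every $x \in \obj{\cat{C}}$, and that the family of inclusions $\iota_x : \Omega_1(x) \hookrightarrow \Omega(x)$ assembles into a natural transformation $\iota : \Omega_1 \Rightarrow \Omega$. Thus the whole argument is a matter of translating this naturality datum into the stated containment condition, and conversely reconstructing the subfunctor structure from that condition.

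For the forward implication, I would assume that $\Omega_1$ is a subfunctor of $\Omega$. The inclusion $\Omega_1(x) \subseteq \Omega(x)$ holds by definition, giving the first condition immediately. For the second, fix $\alpha \in \Hom{\cat{C}}{x}{y}$. Naturality of $\iota$ gives the commuting identity $\Omega(\alpha) \circ \iota_x = \iota_y \circ \Omega_1(\alpha)$; since both $\iota_x$ and $\iota_y$ are genuine inclusions, this says exactly that $\Omega(\alpha)$ restricted to $\Omega_1(x)$ agrees with $\Omega_1(\alpha)$, and in particular takes values in $\Omega_1(y)$. Hence $\Omega(\alpha)\left(\Omega_1(x)\right) \subseteq \Omega_1(y)$, which is the desired containment.

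For the converse, I would assume the two pointwise conditions and build the functorial structure by hand. The containment $\Omega(\alpha)\left(\Omega_1(x)\right) \subseteq \Omega_1(y)$ is precisely what makes the restriction $\Omega_1(\alpha) := \Omega(\alpha)|_{\Omega_1(x)}$ a well-defined map $\Omega_1(x) \to \Omega_1(y)$, and finiteness of each $\Omega_1(x)$ is automatic since $\Omega_1(x) \subseteq \Omega(x)$. I would then verify functoriality by restricting the corresponding identities for $\Omega$: the identity axiom follows because $\Omega(\mathrm{id}_x) = \mathrm{id}_{\Omega(x)}$ restricts to $\mathrm{id}_{\Omega_1(x)}$, and the composition axiom follows from $\Omega(\beta \circ \alpha) = \Omega(\beta) \circ \Omega(\alpha)$ upon restriction to $\Omega_1(x)$. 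Naturality of the inclusions is then automatic, since $\Omega_1(\alpha)$ was defined precisely as the restriction of $\Omega(\alpha)$.

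Since the argument is a direct unwinding of definitions, there is no genuine obstacle; the only point requiring care is the well-definedness of $\Omega_1(\alpha)$ as a map into $\Omega_1(y)$ — that is, recognizing that the containment condition is not merely necessary but is exactly the data needed to restrict the action on morphisms — together with noting that this restriction is \emph{forced} on any subfunctor by the naturality of the inclusion, so that no choice is involved and the two descriptions genuinely coincide.
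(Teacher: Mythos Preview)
Your argument is correct and is the standard definitional unwinding. Note, however, that the paper does not actually supply its own proof of this proposition: it is quoted from \cite{Rosas} and immediately followed by the remark that some authors (e.g.\ \cite{Webb23}) take the stated conditions as the \emph{definition} of a $\cat{C}$-subset. So there is nothing to compare against; your proof simply fills in what the paper leaves as a citation.
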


In some texts, this proposition is taken as the definition of a $\cat{C}$-subset, as in the case in Definition 2.2 of \cite{Webb23}.

Given two $\cat{C}$-sets $\Omega_1$ and $\Omega_2$ we define their \textit{disjoint union} $\Omega_1 \sqcup \Omega_2$ to be the $\cat{C}$-set defined at each object $x \in \obj{\cat{C}}$ by \[
	(\Omega_1 \sqcup \Omega_2)(x) := \Omega_1(x) \sqcup \Omega_2(x)
\] and on morphisms $\alpha: x \longrightarrow y$ in $\cat{C}$ \[
	(\Omega_1 \sqcup \Omega_2)(\alpha) := \Omega_1(\alpha) \sqcup \Omega_2(\alpha)
\] sends each element of $\Omega_i(x)$ to the corresponding one in $\Omega_i(y)$.

We say that a $\cat{C}$-set $\Omega$ is \textit{non-empty} if exists $c \in \obj{\cat{C}}$ such that $\Omega(c) \neq \emptyset$. We will denote the empty $\cat{C}$-set as $\varnothing$.

\begin{defi}
	A non-empty $\cat{C}$-set $\Omega$ is said to be \textit{indecomposable} if it cannot be expressed properly as a disjoint union, that is, if $\Omega_1, \Omega_2$ are $\cat{C}$-sets such that $\Omega = \Omega_1 \sqcup \Omega_2$, then $\Omega_1 = \varnothing$ or $\Omega_2 = \varnothing$.
\end{defi}

As with $G$-sets, any $\cat{C}$-set can be decomposed as a disjoint union of indecomposable sets, as shown in the next result:

\begin{thm}[\cite{Webb23}, Proposition 2.3, (2)]
	Let $\cat{C}$ be a category. Up to order, every non-empty finite $\cat{C}$-set $\Omega$ has a unique decomposition \[
		\Omega = \Omega_1 \sqcup \Omega_2 \sqcup \cdots \sqcup \Omega_n
	\] where each $\Omega_i$ is indecomposable.
\end{thm}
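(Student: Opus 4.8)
The plan is to translate the statement about functors into a purely combinatorial statement about the total set of elements, and to identify the indecomposable summands with the connected components of a suitable relation. Given a finite $\cat{C}$-set $\Omega$, I would first form the set of all its elements, $E := \bigsqcup_{x \in \obj{\cat{C}}} \Omega(x)$, which is finite because $\cat{C}$ has finitely many objects and each $\Omega(x)$ is finite. By the characterization of $\cat{C}$-subsets recalled above, a subfunctor $\Omega_1 \subseteq \Omega$ is the same datum as a subset $S \subseteq E$ that is \emph{forward-closed}, meaning $\alpha a \in S$ whenever $a \in S \cap \Omega(x)$ and $\alpha \in \Hom{\cat{C}}{x}{y}$. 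The crucial observation is that a decomposition $\Omega = \Omega_1 \sqcup \Omega_2$ corresponds exactly to a partition $E = S_1 \sqcup S_2$ in which \emph{both} $S_1$ and $S_2$ are forward-closed; indeed, the maps $\Omega(\alpha)$ must restrict to each summand separately, so no morphism may send an element of one part into the other.

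This motivates defining on $E$ the smallest equivalence relation $\approx$ for which $a \approx \alpha a$ for every element $a \in \Omega(x)$ and every $\alpha \in \Hom{\cat{C}}{x}{y}$; concretely, $a \approx b$ holds iff there is a finite zig-zag chain joining $a$ to $b$ whose consecutive steps are each given by applying some morphism in one direction or the other. For existence, I would check that each equivalence class $[a]$ is forward-closed (immediate, since $b = \alpha a$ forces $b \approx a$), hence defines a subfunctor $\Omega_{[a]}$ via $\Omega_{[a]}(x) = [a] \cap \Omega(x)$; finiteness of $E$ yields finitely many classes $[a_1], \dots, [a_n]$, and visibly $\Omega = \Omega_{[a_1]} \sqcup \cdots \sqcup \Omega_{[a_n]}$. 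Alternatively, existence may be obtained by a routine induction on $\lvert E \rvert$, splitting off a summand whenever $\Omega$ fails to be indecomposable.

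For uniqueness, the key lemma is that a forward-closed subset $S \subseteq E$ whose complement is also forward-closed must be a union of $\approx$-classes: if some class met both $S$ and $E \setminus S$, a zig-zag chain connecting a point of $S$ to a point of its complement would have to cross the boundary at a single morphism step, and that step would violate forward-closedness of exactly one of the two parts. Consequently each $\Omega_{[a]}$ is indecomposable, for splitting it would split the single class $[a]$ into two forward-closed pieces with forward-closed complements. Conversely, in any decomposition $\Omega = \Omega_1 \sqcup \cdots \sqcup \Omega_n$ into indecomposables, each underlying set $S_i$ is forward-closed with forward-closed complement (the union of the remaining $S_j$), hence a union of classes; indecomposability of $\Omega_i$ then forces $S_i$ to be a single class. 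Thus the $S_i$ are precisely the $\approx$-classes, and the decomposition agrees with $\Omega_{[a_1]} \sqcup \cdots \sqcup \Omega_{[a_n]}$ up to reordering.

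The main obstacle I anticipate is getting the notion of connectedness exactly right in the directional setting: unlike the case of $G$-sets, where every morphism is invertible and orbits are transparent, here forward-closedness alone does not detect summands, and one must insist that the complement be forward-closed as well. The heart of the argument is therefore the chain-crossing lemma identifying ``both parts forward-closed'' with ``union of $\approx$-classes''; once that is established, both existence and uniqueness follow formally, and the remaining verifications (that each class yields a genuine subfunctor, and that disjoint union matches the partition of $E$) are straightforward.
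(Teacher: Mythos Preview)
The paper does not supply its own proof of this statement; it is quoted from \cite{Webb23} and used as background. Your argument is correct and is in fact the standard one: the equivalence relation $\approx$ you introduce on $E$ is precisely the relation ``connected by a walk'' that the paper defines immediately after this theorem, and your $\approx$-classes $\Omega_{[a]}$ are exactly the orbits $\orbit{C}{\Omega}{a}$ with respect to walks. The subsequent characterization of indecomposables (Teorema 1.28 of \cite{Rosas}, stated just below in the paper) is the same as your observation that each class yields an indecomposable subfunctor, so your proof dovetails with the surrounding material.
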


\begin{exam} \label{Omega_n}
	Consider the category 
	\begin{center}
		\begin{tikzcd}
			\cat{D} = & x \arrow{r}{\alpha} & y
		\end{tikzcd}
	\end{center}
	
	The indecomposable finite $\cat{D}$-sets are isomorphic to the $\cat{D}$-sets of the form \begin{center}
		\begin{tikzcd}
			\Omega_n = & \underline{n} \arrow{r}{c_1} & \underline{1}
		\end{tikzcd}
	\end{center}
	where for every $n \in \NN$, $\underline{n} = \{1, \ldots, n\}$, and $c_1$ is the 1 constant function. Note that $\Omega_0$ is defined as follows: $\Omega_0(x) = \emptyset$, $\Omega_0(y) = \{1\}$ and $\Omega_0(\alpha)$ is the only function from $\emptyset$ to $\{1\}$.
	
%
%
\end{exam}

\begin{exam}\label{Omega_n2}
	Consider the category 
	\[
	\cat{D} :=
	\begin{tikzcd}
		x \arrow[out=-30,in=30,loop, swap]{r}{\alpha}
	\end{tikzcd}
	\]
	
	The indecomposable finite $\cat{D}$-sets are isomorphic to the $\cat{D}$-sets of the form 
	\[
	\Omega_n :=
	\begin{tikzcd}
		\underline{n} \arrow[out=-30,in=30,loop, swap]{r}{f}
	\end{tikzcd}
	\]
	for every $n \in \NN$, where $f$ is a function with the following property: if $Y \subseteq \underline{n}$ is a maximal subset of $\underline{n}$ such that $f|_Y: Y \longrightarrow Y$ is a permutation, then $f|_Y$ is an only cycle. Note that $\Omega_0$ is the empty $\cat{D}$-set.
	
\end{exam}

A category $\cat{C}$ is said to be of \textit{finite type} if the number of indecomposable $\cat{C}$-sets, up to isomorphism, is finite.

A useful tool for a deeper understanding of the indecomposable $\cat{C}$-sets is presented below.

\begin{defi}
	Let $\Omega: \cat{C} \longrightarrow \set$ be a $\cat{C}$-set and let $x, y \in \obj{\cat{C}}$. A \textit{walk} from $x$ to $y$ is a sequence of morphisms $\alpha = \alpha_n \cdots \alpha_1$ of the form \begin{center}
		\begin{tikzcd}
			x \arrow[dash]{r}{\alpha_1} & y_1 \arrow[dash]{r}{\alpha_2} & y_2 \arrow[dash]{r} & \cdots  \arrow[dash]{r}{\alpha_n} & y_n = y
		\end{tikzcd}
	\end{center} 
	where \begin{tikzcd}[cramped] y_{i - 1} \arrow{r}{\alpha_i} & y_i \end{tikzcd} or \begin{tikzcd}[cramped] y_{i - 1} & y_i \arrow[swap]{l}{\alpha_i} \end{tikzcd} for all $i$. We denote a walk $\alpha$ from $x$ to $y$ by $\alpha: x \rightsquigarrow y$ or \begin{tikzcd}[cramped] x \arrow[rightsquigarrow]{r}{\alpha} & y \end{tikzcd}.
	
	A walk is said to be \textit{reduced} if no two consecutive morphisms $\alpha_i$ and $\alpha_{i + 1}$ of the walk can be composed.
	
	If $\alpha$ is a walk from $x$ to $y$, we define the \textit{opposite walk} of $\alpha$, denoted by $\op{\alpha}$, as the walk going on the opposite direction, that is, $\op{\alpha}$ is the walk from $y$ to $x$.
\end{defi}


Let $x, y \in \obj{\cat{C}}$ and $u \in \Omega(x)$. With this, we define the \textit{$\cat{C}$-orbit of $u$ with respect to walks, evaluated at $y$}, as:
\[
	\left(\orbit{C}{\Omega}{u}\right)(y) = \bigcup_{\substack{x \rightsquigarrow y \\ \alpha}} \alpha u
\]
where $\alpha$ is a walk from $x$ to $y$, and $\alpha u$ denotes its action on $u$.

We have a characterization of indecomposable $\cat{C}$-sets in terms of orbits with respect to walks:

\begin{thm}[\cite{Rosas}, Teorema 1.28]
	Let $\Omega$ be a $\cat{C}$-set. The following conditions are equivalent:
	\begin{enumerate}
		\item $\Omega$ is an indecomposable $\cat{C}$-set;
		
		\item For every object $x$ of $\cat{C}$ and every element $u \in \Omega(x)$, $\orbit{C}{\Omega}{u} = \Omega$;
		
		\item There exists an object $x$ of $\cat{C}$ and an element $u \in \Omega(x)$ such that $\orbit{C}{\Omega}{u} = \Omega$.
	\end{enumerate}
\end{thm}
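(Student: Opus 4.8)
The plan is to prove the cyclic chain $(1) \Rightarrow (2) \Rightarrow (3) \Rightarrow (1)$, after first isolating two structural facts about orbits with respect to walks. The first fact is that for each $u \in \Omega(x)$ the assignment $y \mapsto (\orbit{C}{\Omega}{u})(y)$ is a $\cat{C}$-subset of $\Omega$: containment $(\orbit{C}{\Omega}{u})(y) \subseteq \Omega(y)$ is immediate, and closure under a morphism $\beta \colon y \to z$ holds because extending a walk $\alpha \colon x \rightsquigarrow y$ by $\beta$ yields a walk $\beta\alpha \colon x \rightsquigarrow z$, so that $\beta \cdot (\alpha u) \in (\orbit{C}{\Omega}{u})(z)$. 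The second, and central, fact is that ``lying in a common orbit'' is an equivalence relation on the disjoint union $\bigsqcup_{x \in \obj{\cat{C}}} \Omega(x)$: reflexivity uses the trivial walk, transitivity uses concatenation of walks, and symmetry is exactly where the opposite walk enters, since if $v$ is reached from $u$ along $\alpha$ then $u$ is reached from $v$ along $\op{\alpha}$. Consequently the orbits partition the total space, and this partition is what converts decompositions into subsets and back.

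For $(3) \Rightarrow (1)$ I would argue as follows. Suppose $\orbit{C}{\Omega}{u} = \Omega$ for some $u \in \Omega(x)$; in particular $\Omega$ is non-empty. Write $\Omega = \Omega_1 \sqcup \Omega_2$ and, without loss of generality, take $u \in \Omega_1(x)$. The key point is that each $\Omega_i$ is not only closed under forward morphisms (being a subfunctor) but also closed under backward steps: if $\beta \colon y \to z$ and $\beta w \in \Omega_1(z)$ with $w \in \Omega(y)$, then $w \in \Omega_1(y)$, since otherwise $w \in \Omega_2(y)$ would force $\beta w \in \Omega_2(z)$, contradicting disjointness at $z$. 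Iterating this along an arbitrary walk shows that the whole orbit of $u$ stays inside $\Omega_1$, whence $\Omega = \orbit{C}{\Omega}{u} \subseteq \Omega_1$ and $\Omega_2 = \varnothing$; thus $\Omega$ is indecomposable.

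For $(1) \Rightarrow (2)$ I would run the same backward-closure idea in the reverse direction. Fix any $x$ and $u \in \Omega(x)$, set $\Omega' = \orbit{C}{\Omega}{u}$, and define $\Omega''$ by $\Omega''(y) = \Omega(y) \setminus \Omega'(y)$. Using the equivalence-relation structure, concretely that if $\beta w$ lay in $\Omega'$ then the backward step $\op{\beta}$ would place $w$ in the same orbit and hence in $\Omega'$, one checks that $\Omega''$ is a $\cat{C}$-subset, so that $\Omega = \Omega' \sqcup \Omega''$. Since $\Omega$ is indecomposable and $\Omega'$ contains $u$, we conclude $\Omega'' = \varnothing$, that is $\orbit{C}{\Omega}{u} = \Omega$. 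Finally $(2) \Rightarrow (3)$ is immediate once $\Omega \neq \varnothing$, so that some $x$ and $u \in \Omega(x)$ exist; the degenerate empty case is excluded since indecomposable $\cat{C}$-sets are non-empty by definition, and I would read condition (2) for non-empty $\Omega$ accordingly.

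The step I expect to be the main obstacle, and the one deserving the most care, is the backward-closure property underlying both $(3) \Rightarrow (1)$ and $(1) \Rightarrow (2)$: making precise the action of a walk that traverses a morphism in the reverse direction as a passage to preimages under $\Omega(\beta)$, and verifying that the complement of an orbit is genuinely a subfunctor rather than merely a levelwise complement. Everything else reduces to bookkeeping with walks, for which the opposite-walk construction $\op{\alpha}$ supplies the symmetry that makes the orbit relation, and hence the decomposition, behave as expected.
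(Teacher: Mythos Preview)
The paper does not prove this theorem; it is quoted from \cite{Rosas} and stated without proof, so there is no in-paper argument to compare against.

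Your proof is the standard one and it is correct. The key structural observation---that each summand in a disjoint-union decomposition is closed not only under forward morphisms but also under preimages, so that an entire walk-orbit is trapped inside whichever summand meets it---is exactly what makes the orbits coincide with the indecomposable pieces, and you handle it cleanly in both directions. The equivalence-relation formulation on $\bigsqcup_{x} \Omega(x)$ is a convenient way to package symmetry (via $\op{\alpha}$) and transitivity (via concatenation), and it immediately yields that the levelwise complement of an orbit is again a subfunctor. The one wrinkle you flag, that condition~(2) is vacuously satisfied by the empty $\cat{C}$-set while (1) and (3) are not, is a defect of the statement rather than of your argument; reading the theorem under a standing non-emptiness hypothesis, as you do, is the intended interpretation.
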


A category $\cat{C}$ is said to be \textit{connected} if, for every $x, y \in \obj{\cat{C}}$, there exists a walk between $x$ and $y$. Equivalently, a category $\cat{C}$ is said to be connected if it cannot be expressed as $\cat{C} = \cat{C}^1 \sqcup \cat{C}^2$, where $\cat{C}^1$ and $\cat{C}^2$ are non-empty full subcategories of $\cat{C}$ such that no morphism start in one and finish in the other one.

\begin{defi}
	Let $\Omega$ be a $\cat{C}$-set, let $x, y \in \obj{\cat{C}}$ and $u \in \Omega(x)$. The $\cat{C}$-set \textit{generated by} $u$, \textit{evaluated at} $y$ is given by  
	\begin{align*}
		(\gen{C}{\Omega}{u})(y) = \Omega \left(\Hom{\cat{C}}{x}{y}\right)(u)
	\end{align*}
\end{defi}

In a natural way, given $x \in \obj{\cat{C}}$ and $u \in \Omega(x)$, we have that $\gen{C}{\Omega}{u}$ is a $\cat{C}$-subset of $\Omega$.

\begin{rem}
	Let $\Omega$ be a $\cat{C}$-set, and let $x, y \in \obj{\cat{C}}$ and $u \in \Omega(x)$. We have that $\gen{C}{\Omega}{u}$ is a $\cat{C}$-subset of $\orbit{C}{\Omega}{u}$.
\end{rem}

Note that the equality does not always hold. We now present an example where it fails:

\begin{exam}
	Consider the category 
	\begin{center}
		\begin{tikzcd}
			\cat{D} = & x \arrow{r}{\alpha} & y
		\end{tikzcd}
	\end{center} 
	and let $\Omega_n$ be as in Example \ref{Omega_n}. Consider the element $1 \in \Omega_n(y)$. We have that $(\gen{D}{\Omega_n}{1})(x) = \emptyset$, since $\Hom{\cat{D}}{y}{x} = \emptyset$.
	
	On the other hand, the preimage $\Omega_n(\alpha)^{-1}(1) = \underline{n} \subseteq (\orbit{D}{\Omega_n}{1})(x)$. We conclude that $(\orbit{D}{\Omega_n}{1})(x) = \underline{n}$. This means that $\gen{D}{\Omega_n}{1} \neq \orbit{D}{\Omega_n}{1}$.
\end{exam}

\begin{defi}
	A non-empty $\cat{C}$-set $\Omega$ is said to be \textit{simple} if it has no subfunctors apart from the whole functor and the empty functor.
\end{defi}

\begin{exam}
	Let $\cat{D}$ be the category defined in Example \ref{Omega_n}, and let $\Omega_n$ be the indecomposable $\cat{D}$-sets as defined therein. Note that $\Omega_0$ is the only $\cat{D}$-simple.
\end{exam}

A category $\cat{C}$ is \textit{semisimple} if every $\cat{C}$-set is isomorphic to a finite disjoint union of simple $\cat{C}$-sets.

Note that every simple $\cat{C}$-set is indecomposable, but not every indecomposable $\cat{C}$-set is simple, as we can see in Example \ref{Omega_n}. In that case, the only non-empty simple $\cat{D}$-set is $\Omega_1$. However, a category $\cat{C}$ is semisimple if and only if every indecomposable $\cat{C}$-set is simple. 

As in the case of indecomposable sets, we have a characterization of simple $\cat{C}$-sets in terms of generated $\cat{C}$-sets:

\begin{thm} \label{simples}
	Let $\Omega$ be a $\cat{C}$-set. The following conditions are equivalent:
	\begin{enumerate}
		\item $\Omega$ is a simple $\cat{C}$-set;
		
		\item For every object $x$ of $\cat{C}$ and every element $u \in \Omega(x)$, $\gen{C}{\Omega}{u} = \Omega$.
	\end{enumerate}
\end{thm}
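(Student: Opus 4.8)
The plan is to prove the two implications separately, relying in both directions on the remark above that $\gen{C}{\Omega}{u}$ is a $\cat{C}$-subset of $\Omega$, together with the characterization of $\cat{C}$-subsets (inclusion at each object, stable under the maps $\Omega(\alpha)$). Throughout I take $\Omega$ to be non-empty, as demanded by the definition of simplicity.

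For the implication $(1) \Rightarrow (2)$, I fix an object $x \in \obj{\cat{C}}$ and an element $u \in \Omega(x)$, and consider the $\cat{C}$-subset $\gen{C}{\Omega}{u}$. The first step is to observe that this subset is non-empty: since $\cat{C}$ is a category, the identity $\mathrm{id}_x$ lies in $\Hom{\cat{C}}{x}{x}$, and functoriality gives $\mathrm{id}_x u = u$, so $u \in (\gen{C}{\Omega}{u})(x)$. Simplicity of $\Omega$ forces every non-empty subfunctor to equal $\Omega$, whence $\gen{C}{\Omega}{u} = \Omega$, which is exactly $(2)$.

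For the implication $(2) \Rightarrow (1)$, I start with an arbitrary non-empty $\cat{C}$-subset $\Omega_1$ of $\Omega$ and aim to show $\Omega_1 = \Omega$. Since $\Omega_1$ is non-empty, I can choose an object $x$ and an element $u \in \Omega_1(x) \subseteq \Omega(x)$. The key step is the inclusion $\gen{C}{\Omega}{u} \subseteq \Omega_1$: for any object $y$ and any $\alpha \in \Hom{\cat{C}}{x}{y}$, the subfunctor condition $\Omega(\alpha)\left(\Omega_1(x)\right) \subseteq \Omega_1(y)$ applied to $u$ yields $\alpha u \in \Omega_1(y)$; since $(\gen{C}{\Omega}{u})(y)$ is precisely the set of such elements $\alpha u$, I get $(\gen{C}{\Omega}{u})(y) \subseteq \Omega_1(y)$ for every $y$. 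Combining this with hypothesis $(2)$, which gives $\gen{C}{\Omega}{u} = \Omega$, I obtain $\Omega = \gen{C}{\Omega}{u} \subseteq \Omega_1 \subseteq \Omega$, so $\Omega_1 = \Omega$. Hence the only subfunctors of $\Omega$ are $\varnothing$ and $\Omega$ itself, and $\Omega$ is simple.

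I do not expect a serious obstacle, as both directions reduce to the single observation that $\gen{C}{\Omega}{u}$ is the smallest $\cat{C}$-subset of $\Omega$ containing $u$: it is a subset containing $u$ by the identity argument, and it is contained in every subset containing $u$ by the stability condition. The only delicate point is the empty $\cat{C}$-set, for which $(2)$ holds vacuously while $(1)$ fails; this is why the equivalence must be read under the standing assumption that $\Omega$ is non-empty, consistent with the definition of simplicity.
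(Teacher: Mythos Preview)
Your argument is correct. The paper states this theorem without proof, so there is nothing to compare against; your proof is the natural one --- it amounts precisely to the observation you isolate, that $\gen{C}{\Omega}{u}$ is the smallest $\cat{C}$-subset containing $u$. Your remark about the empty $\cat{C}$-set is also on point: the equivalence is to be read for non-empty $\Omega$, in line with the paper's convention that simplicity is only defined in that case.
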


\begin{defi}
	We define the \textit{Burnside ring} of the category $\cat{C}$, denoted by $B(\cat{C})$, as the Grothendieck group of the category $\CSet{\cat{C}}$ with respect to the disjoint union of $\cat{C}$-sets.
\end{defi}

	We have a product operation in $\CSet{\cat{C}}$: given two $\cat{C}$-sets $\Omega, \Psi$, we define $(\Omega \times \Psi)(x) = \Omega(x) \times \Psi(x)$ for every object $x$ of $\cat{C}$, and $(\Omega \times \Psi)(\alpha) = (\Omega(\alpha), \Psi(\alpha))$ for every morphism $\alpha$ of $\cat{C}$. With this definition, the multiplication in $B(C)$ is given by $\times$ on the generators.

As in the case of finite groups, we have the following theorem:

\begin{thm}[\cite{Webb23}, Proposition 2.9]
	Let $\cat{C}$ be a category. Its Burnside ring $B(\cat{C})$ is a commutative ring with identity, and with $\ZZ$-basis the set of isomorphism classes of the indecomposable $\cat{C}$-sets.
\end{thm}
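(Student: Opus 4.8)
The plan is to separate the two assertions cleanly: the additive structure, i.e.\ that $B(\cat{C})$ is $\ZZ$-free on the indecomposables, which follows directly from unique decomposition; and the multiplicative structure, i.e.\ that $\times$ makes it a commutative ring with identity, which follows from the fact that the Cartesian product of finite sets distributes over disjoint union, pointwise and naturally in the objects of $\cat{C}$.

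First I would record that the isomorphism classes of finite $\cat{C}$-sets form a commutative monoid $(M,\sqcup)$ whose neutral element is the class of $\varnothing$; commutativity and associativity of $\sqcup$ are inherited objectwise from disjoint union of finite sets. By the unique decomposition theorem (Webb, Prop.\ 2.3(2)), every non-empty finite $\cat{C}$-set is a disjoint union of indecomposables, uniquely up to order; equivalently, $M$ is the free commutative monoid on the set $I$ of isomorphism classes of indecomposable $\cat{C}$-sets. Since the Grothendieck group of a free commutative monoid on $I$ is the free abelian group $\ZZ^{(I)}$, this already gives that $B(\cat{C})$ is $\ZZ$-free with basis $I$, which is the asserted basis statement.

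Next I would install the ring structure at the level of the semiring $(M,\sqcup,\times)$ and then pass to the Grothendieck group. The product $\times$ is commutative and associative up to isomorphism: at each object $x$ the bijections $\Omega(x)\times\Psi(x)\to\Psi(x)\times\Omega(x)$ and $(\Omega(x)\times\Psi(x))\times\Theta(x)\to\Omega(x)\times(\Psi(x)\times\Theta(x))$ are natural in $x$ because all structure maps act coordinatewise, so they assemble into isomorphisms in $\CSet{\cat{C}}$. The constant $\cat{C}$-set $\mathbf{1}$ sending each object to a fixed one-point set and each morphism to its identity is a two-sided identity, since $\Omega(x)\times\{*\}\cong\Omega(x)$ naturally. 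The decisive point is distributivity: for finite sets one has $A\times(B\sqcup C)=(A\times B)\sqcup(A\times C)$ on the nose, and this identity is compatible with the coordinatewise action on morphisms, hence $\Omega\times(\Psi_1\sqcup\Psi_2)\cong(\Omega\times\Psi_1)\sqcup(\Omega\times\Psi_2)$ as $\cat{C}$-sets. Distributivity is exactly what makes $\times$ a $\ZZ$-bilinear operation that descends to the Grothendieck group and extends uniquely from the generators to all of $B(\cat{C})$, yielding a commutative ring with identity the class of $\mathbf{1}$.

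The main obstacle is essentially careful bookkeeping rather than any deep idea: for each pointwise bijection (swap, reassociation, unit, and distributivity) one must verify naturality, i.e.\ that it commutes with the functorial image of every morphism of $\cat{C}$, so that it is a genuine morphism in $\CSet{\cat{C}}$ and not merely a family of unrelated bijections; and one must check that the induced bilinear product is well defined on Grothendieck-group classes, which is precisely what distributivity guarantees. None of these verifications is hard, but naturality is the step where a careless argument could break down, so I would make it explicit that all the relevant bijections are coordinatewise and therefore automatically commute with the objectwise structure maps.
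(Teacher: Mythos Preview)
The paper does not prove this theorem at all: it is stated with a citation to \cite{Webb23}, Proposition~2.9, and no argument is given in the present paper. So there is nothing to compare your approach against here.

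That said, your proposal is correct and is essentially the standard proof. The two halves are exactly as you describe: unique decomposition (the cited Proposition~2.3(2)) makes the monoid of isomorphism classes free commutative on the indecomposables, hence its Grothendieck group is $\ZZ$-free on $I$; and the objectwise Cartesian product is commutative, associative, unital (with unit the constant one-point $\cat{C}$-set $\mathbf{1}$), and distributes over $\sqcup$, all naturally, so it descends to a commutative ring structure on $B(\cat{C})$. Your emphasis on checking naturality of the coordinatewise bijections is the right place to be careful, and your observation that distributivity is precisely what guarantees the product is well defined on Grothendieck classes is the correct justification for the passage from semiring to ring.
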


While bisets can themselves be seen as a generalization of $G$-sets, we further extend this notion to the categorical setting. A more general framework for this theory can be found in \cite[Section 3]{Webb23} and \cite[Section 7.8]{Borceux}.

\begin{defi}
	Given categories $\cat{C}$ and $\cat{D}$, we define a $(\cat{C}, \cat{D})$\textit{-biset} to be a functor $\Omega: \cat{C} \times \op{\cat{D}} \longrightarrow \set$, that is, a $\cat{C} \times \op{\cat{D}}$-set. We will denote it by $\biset{\cat{C}}{\Omega}{\cat{D}}$.
\end{defi}

\begin{rem}
	A $\cat{C}$-set $\Omega$ can be regarded as a $(\cat{C}, \mathbf{1})$-biset, where $\mathbf{1}$ is the category with a single object and a single morphism. We denote such a $\cat{C}$-set as $\biset{\cat{C}}{\Omega}{}$ and, when the category is clear from context, simply by $\Omega$.
\end{rem}

Given a $(\cat{C}, \cat{D})$-biset $\Omega$, morphisms $\alpha: x \longrightarrow x_1$ in $\cat{C}$ and $\beta: y_1 \longrightarrow y$ in $\cat{D}$, and an element $u \in \Omega(x, y)$, we get elements \begin{eqnarray*}
	\alpha u := \Omega(\alpha, 1_y)(u) \in \Omega(x_1, y), \\
	u \beta := \Omega(1_x, \beta)(u) \in \Omega(x, y_1).
\end{eqnarray*}

If $\alpha$ and $\beta$ are walks, we denote the image of $u$ under the corresponding walk using the same notation as for morphisms.

\begin{exam}
	If $\cat{C}$ is the delooping of a finite group $G$ and $\cat{D}$ is the delooping of a finite group $H$, a $(\cat{C}, \cat{D})$-biset is a $(G, H)$-biset.
\end{exam}

We can form disjoint unions of bisets, but there is also a notion of \textit{product} or \textit{composition}, which is described below.

\begin{defi}
	Given a $(\cat{C}, \cat{D})$-biset $\biset{\cat{C}}{\Omega}{\cat{D}}$ and a $(\cat{D}, \cat{E})$-biset $\biset{\cat{D}}{\Psi}{\cat{E}}$ we build a $(\cat{C}, \cat{E})$-biset $\Omega \times_{\cat{D}} \Psi$ by the formula, for $x \in \obj{\cat{C}}$ and $z \in \obj{\cat{E}}$, \[
	(\Omega \times_\cat{D} \Psi)(x, z) = \left(\bigsqcup_{y \in \obj{\cat{D}}} \Omega(x, y) \times \Psi(y, z)\right) / \sim, 
	\] where $\sim$ is defined as follows.
	Let $(u, b) \in \Omega(x, y) \times \Psi(y, z)$ and $(a, v) \in \Omega(x, y') \times \Psi(y', z)$, we say that $(u, b) \sim (a, v)$ if and only if there exists a walk $\beta: y \rightsquigarrow y'$ in $\cat{D}$ such that $u \in a \beta$ and $v \in \beta b$.
\end{defi}

Given categories $\cat{C}$, $\cat{D}$ and $\cat{E}$, and functors $F: \cat{C} \longrightarrow \cat{E}$ and $G: \cat{D} \longrightarrow \cat{E}$ we obtain a $(\cat{C}, \cat{D})$-biset that we denote $\biset{\cat{C}^F}{\cat{E}}{{}^G\cat{D}}$. In objects $x$ of $\cat{C}$ and $y$ of $\cat{D}$ this biset is defined by \[
	\biset{\cat{C}^F}{\cat{E}}{{}^G\cat{D}}(x, y) = \Hom{\cat{E}}{G(y)}{F(x)}.
\]

For example, if we take $\cat{C} = \cat{D} = \cat{E}$ and $F = G = 1_{\cat{C}}$ we get the biset $\biset{\cat{C}}{\cat{C}}{\cat{C}}$ defined in objects as $\biset{\cat{C}}{\cat{C}}{\cat{C}}(x, y) = \Hom{\cat{C}}{y}{x}$.

As in biset theory for finite groups, if $\cat{D}$ is a subcategory of $\cat{C}$ and we consider the inclusion functor $\iota: \cat{D} \longrightarrow \cat{C}$, then the biset $\biset{\cat{D}^\iota}{\cat{C}}{\cat{C}} = \biset{\cat{D}}{\cat{C}}{\cat{C}}$ encodes the restriction operation, and the biset $\biset{\cat{C}}{\cat{C}}{{}^\iota \cat{D}} = \biset{\cat{C}}{\cat{C}}{\cat{D}}$ encodes the induction operation.


\section{Proof of the characterization theorem}

This section presents the main result of this work: a characterization of groupoids in terms of their simple and indecomposable $\cat{C}$-sets. The proof relies on a sequence of technical propositions that will be used to establish the main theorem. In this section, we will work only with connected categories, since the Burnside ring of a disconnected category can be seen as the product of the Burnside rings of its connected components.

\begin{thm} \label{adj}
	Let $\cat{D}$ be a subcategory of a category $\cat{C}$, and let $\biset{\cat{D}}{\cat{C}}{\cat{C}}$ and $\biset{\cat{C}}{\cat{C}}{\cat{D}}$ be the restriction and induction operations defined previously. Consider the functors \[
		\begin{tikzcd}
			\CSet{\cat{D}} \arrow[shift left]{r}{i} & \CSet{\cat{C}} \arrow[shift left]{l}{r}
		\end{tikzcd}
	\] given as follows: \begin{itemize}
		\item Let $\Psi \in \obj{(\CSet{\cat{C}})}$, then $r(\Psi) = \comp{\cat{D}}{\cat{C}}{\cat{C}}{}{\Psi}{}$, 
		
		\item Let $\Omega \in \obj{(\CSet{\cat{D}})}$, then $i(\Omega) = \comp{\cat{C}}{\cat{C}}{\cat{D}}{}{\Omega}{}$.
	\end{itemize}
	
	Then $r$ and $i$ are adjoint functors.
\end{thm}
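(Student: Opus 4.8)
The plan is to show that $i$ is left adjoint to $r$ by producing a bijection
\[
	\Phi \colon \Hom{\CSet{\cat{C}}}{i(\Omega)}{\Psi} \longrightarrow \Hom{\CSet{\cat{D}}}{\Omega}{r(\Psi)},
\]
natural in $\Omega \in \CSet{\cat{D}}$ and $\Psi \in \CSet{\cat{C}}$, exactly as in the induction--restriction adjunction for $G$-sets. Writing $\iota \colon \cat{D} \hookrightarrow \cat{C}$ for the inclusion, I would first unwind both functors on objects. The composition $\times_{\cat{D}}$ is the usual tensor product of functors: the walk-equivalence $\sim$ is precisely the equivalence relation generated by the elementary identifications $(m\delta, n) \sim (m, \delta b)$ attached to a single morphism $\delta$ of the middle category, since a walk is a zigzag of morphisms, one step of which is one such move (a forward step giving the move and a backward step its inverse, through the preimages encoded by the membership ``$\in$''), with reflexivity, symmetry and transitivity coming from the trivial, reversed and concatenated walks. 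Hence
\[
	i(\Omega)(x) = \Bigl( \bigsqcup_{y \in \obj{\cat{D}}} \Hom{\cat{C}}{y}{x} \times \Omega(y) \Bigr)\big/\!\sim, \qquad r(\Psi)(y) = \Bigl( \bigsqcup_{x \in \obj{\cat{C}}} \Hom{\cat{C}}{x}{y} \times \Psi(x) \Bigr)\big/\!\sim,
\]
and I write $[\phi, b]$ for the class of $(\phi, b)$ with $\phi \colon y \to x$ in $\cat{C}$, $y \in \obj{\cat{D}}$ and $b \in \Omega(y)$. The co-Yoneda identification $[\phi, c] \mapsto \Psi(\phi)(c)$ shows $r(\Psi) \cong \Psi \circ \iota$, so $r$ is honest restriction along $\iota$, while $i$ is the left Kan extension along $\iota$; this is the conceptual reason the adjunction must hold, and it also predicts the formulas below.

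Next I would define $\Phi$ and its inverse $\Gamma$. Given $f \colon i(\Omega) \to \Psi$, put $(\Phi f)_y(b) = f_y([1_y, b])$ for $y \in \obj{\cat{D}}$ and $b \in \Omega(y)$; given $g \colon \Omega \to r(\Psi)$, put $(\Gamma g)_x([\phi, b]) = \Psi(\phi)\bigl(g_y(b)\bigr)$ for $\phi \colon y \to x$ and $b \in \Omega(y)$. The single genuinely delicate point --- the step I expect to be the main obstacle --- is that $\Gamma g$ is well defined on $\sim$-classes. By the reduction above it suffices to test invariance under one elementary move $[\phi\delta, b] = [\phi, \delta b]$ with $\delta \colon y \to y'$ in $\cat{D}$, and this is exactly the naturality square of $g$ over $\cat{D}$: since $g_{y'} \circ \Omega(\delta) = \Psi(\delta) \circ g_y$, we get
\[
	\Psi(\phi\delta)\bigl(g_y(b)\bigr) = \Psi(\phi)\Psi(\delta)\bigl(g_y(b)\bigr) = \Psi(\phi)\bigl(g_{y'}(\delta b)\bigr).
\]
Because $\sim$ is the symmetric--transitive closure of these moves, no separate verification for the backward steps of a walk is needed, which is what keeps the set-valued action from causing trouble.

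The remaining checks are routine and I would carry them out in order: $\Gamma g$ is natural in the $\cat{C}$-variable, from $i(\Omega)(\alpha)[\phi, b] = [\alpha\phi, b]$ and $\Psi(\alpha\phi) = \Psi(\alpha)\Psi(\phi)$; $\Phi f$ is natural over $\cat{D}$, using naturality of $f$ together with the identity $[\delta, b] = [1_{y'}, \delta b]$; and $\Phi, \Gamma$ are mutually inverse, which reduces to $\Psi(1_y)(g_y(b)) = g_y(b)$ and $\Psi(\phi)\bigl(f_y([1_y,b])\bigr) = f_x([\phi,b])$, the latter again from naturality of $f$ and $[\phi, b] = i(\Omega)(\phi)[1_y, b]$. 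Naturality of $\Phi$ in $\Omega$ and $\Psi$ is then immediate from the formulas. Equivalently, the whole argument packages into the unit $\eta_{\Omega, y}(b) = [1_y, b]$ and counit $\epsilon_{\Psi, x}[\phi, c] = \Psi(\phi)(c)$, whose triangle identities collapse to $\Psi(1_y)(c) = c$ and $[\phi \circ 1_y, b] = [\phi, b]$; I would likely present it in this unit--counit form, since it isolates the one nontrivial well-definedness check above.
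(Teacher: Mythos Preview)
Your proposal is correct and follows essentially the same approach as the paper: both construct the explicit bijections $\Phi(f)_y(b) = f_y([1_y,b])$ and $\Gamma(g)_x([\phi,b]) = \Psi(\phi)(g_y(b))$ and verify they are mutually inverse. You supply considerably more detail than the paper does --- in particular the reduction of the walk-equivalence to elementary moves and the well-definedness check for $\Gamma$, which the paper dispatches in a single sentence --- and you add the Kan-extension and unit--counit perspectives, but the underlying argument is the same.
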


\begin{proof}
	We give the explicit isomorphisms:
	
	\[
		\begin{tikzcd}
			\Hom{\CSet{\cat{C}}}{i(\Omega)}{\Psi} \arrow[shift left]{rr}{\alpha} && \Hom{\CSet{\cat{D}}}{\Omega}{r(\Psi)} \arrow[shift left]{ll}{\beta}
		\end{tikzcd}
	\]
	
	Let $x \in \obj{\cat{D}}$, $y \in \obj{\cat{C}}$ be objects, and let $\zeta: i(\Omega) \longrightarrow \Psi$ and $\theta: \Omega \longrightarrow r(\Psi)$ be morphisms in the corresponding categories.
	
	For $\alpha$: note that we can see $r(\Psi)$ as $\Psi$, but restricted to the objects in $\cat{D}$ only. Then $\alpha(\zeta)_x : \Omega(x) \longrightarrow r(\Psi)(x) = \Psi(x)$, so if $a \in \Omega(x)$, \[\alpha(\zeta)_x(a) = \zeta_x([1_x, a]).\]
	
	For $\beta$: recall that $i(\Omega) = \comp{\cat{C}}{\cat{C}}{\cat{D}}{}{\Omega}{}$. Then $\beta(\theta)_y : i(\Omega)(y) \longrightarrow \Psi(y)$, so if $[f, b] \in i(\Omega)(y)$, \[
		\beta(\theta)_y([f, b]) = \Psi(f)(\theta_z(b)),
	\] with $z \in \obj{\cat{D}}$ such that $b \in \Omega(z)$.
	
	Note that both $\alpha$ and $\beta$ are well-defined and are inverses of each other.
\end{proof}

\begin{cor} \label{adj2}
	There exists, for every $\cat{D}$-set $\Omega$, with $\cat{D}$ a subcategory of a category $\cat{C}$, a natural transformation \[
		\begin{tikzcd}
			\eta: \Omega \arrow{r} & ri(\Omega)
		\end{tikzcd}
	\] such that for every $y \in \obj{\cat{D}}$, $\eta_y(a) = [1_y, a]$.
\end{cor}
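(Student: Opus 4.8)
The plan is to obtain $\eta$ as the unit of the adjunction $i \dashv r$ established in Theorem~\ref{adj} (where $i$ is the left adjoint, since $\Hom{\CSet{\cat{C}}}{i(\Omega)}{\Psi} \cong \Hom{\CSet{\cat{D}}}{\Omega}{r(\Psi)}$). Fixing a $\cat{D}$-set $\Omega$ and taking $\Psi = i(\Omega)$ in that theorem, I would apply the natural bijection $\alpha$ to the identity morphism $1_{i(\Omega)} \in \Hom{\CSet{\cat{C}}}{i(\Omega)}{i(\Omega)}$ and set $\eta := \alpha(1_{i(\Omega)})$. Since $\alpha$ takes values in $\Hom{\CSet{\cat{D}}}{\Omega}{ri(\Omega)}$, the resulting $\eta$ is, by construction, a morphism in $\CSet{\cat{D}}$; that is, a natural transformation $\Omega \longrightarrow ri(\Omega)$.

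It then remains to compute the components and recover the stated formula. Using the description of $\alpha$ from the proof of Theorem~\ref{adj}, for $y \in \obj{\cat{D}}$ and $a \in \Omega(y)$ one gets
\[
\eta_y(a) = \alpha(1_{i(\Omega)})_y(a) = (1_{i(\Omega)})_y([1_y, a]) = [1_y, a].
\]
Here I use the identification $ri(\Omega)(y) \cong i(\Omega)(y)$ for objects $y$ of $\cat{D}$ employed in Theorem~\ref{adj} (``$r(\Psi)$ is $\Psi$ restricted to the objects of $\cat{D}$''), so that $[1_y, a]$, with $1_y \in \Hom{\cat{C}}{y}{y}$ and $a \in \Omega(y)$, is a legitimate element of $ri(\Omega)(y)$.

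Although the abstract argument already yields naturality for free, since the proof of Theorem~\ref{adj} only records that $\alpha$ is a well-defined bijection, for transparency I would confirm the naturality square directly by unwinding the construction $i(\Omega) = \comp{\cat{C}}{\cat{C}}{\cat{D}}{}{\Omega}{}$. For a morphism $g: y \longrightarrow y'$ in $\cat{D}$, the covariant $\cat{C}$-action on the induced biset (post-composition on the $\Hom$-component) sends $[1_y, a]$ to $[g \circ 1_y, a] = [g, a]$, so $ri(\Omega)(g)(\eta_y(a)) = [g, a]$; on the other hand $\eta_{y'}(\Omega(g)(a)) = [1_{y'}, ga]$. Taking the length-one walk $\beta = g: y \rightsquigarrow y'$ in $\cat{D}$, the definition of $\sim$ gives $1_{y'}\beta = g$ and $\beta a = \Omega(g)(a) = ga$, whence $[g, a] = [1_{y'}, ga]$ and the square commutes.

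The only real obstacle is the bookkeeping: one must correctly identify $ri(\Omega)(y)$ with $i(\Omega)(y)$, describe the $\cat{C}$-action on $\biset{\cat{C}}{\cat{C}}{\cat{D}} \times_{\cat{D}} \Omega$ as post-composition on the $\Hom$-component, and exhibit the walk witnessing the required equivalence. Once these identifications are fixed the verification is immediate, and the conceptual content is simply that $\eta$ is the unit of the adjunction $i \dashv r$.
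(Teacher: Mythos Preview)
Your proposal is correct and follows exactly the paper's approach: the paper's entire proof is the single sentence ``This natural transformation is the unit of the adjunction given in Theorem~\ref{adj},'' and you simply unpack that by applying the bijection $\alpha$ of Theorem~\ref{adj} to $1_{i(\Omega)}$ to read off the formula $\eta_y(a)=[1_y,a]$. Your additional verification of the naturality square is more than the paper provides, but is consistent with it.
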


\begin{proof}
	This natural transformation is the unit of the adjunction given in Theorem \ref{adj}.
\end{proof}

%
%

\begin{lem} \label{lemw}
	Let $x$ and $y$ be objects of a category $\cat{C}$, and let 
	\begin{tikzcd}[cramped] 	
		\cat{D} = x \arrow{r}{\alpha} & y
	\end{tikzcd} 
	be a subcategory of $\cat{C}$ such that $\alpha$ is not an isomorphism in $\cat{C}$. Without loss of generality, suppose that $\alpha$ is not a retraction in $\cat{C}$. Define the walk $\beta := (\op{\alpha} \alpha)^{t}$, for some $t \geq 1$. Then $1_{x} \cdot \beta = \emptyset$.
\end{lem}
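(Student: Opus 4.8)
The plan is to read $1_x \cdot \beta$ as the right walk-action on $1_x \in \Hom{\cat{C}}{x}{x}$ inside the hom-biset, and to show that the computation collapses already at the first edge of $\beta$. Regard $\beta = (\op{\alpha}\alpha)^{t}$ as a walk based at $x$; its first step traverses the edge $\alpha$ in the direction $x \rightsquigarrow y$. Since the right action of $\alpha\colon x \to y$ is precomposition, sending $u \in \Hom{\cat{C}}{y}{c}$ to $u\alpha \in \Hom{\cat{C}}{x}{c}$, the induced structure map runs from the set indexed by $y$ to the set indexed by $x$. Hence traversing $\alpha$ in the opposite direction $x \rightsquigarrow y$ is computed as a preimage, and the first step replaces $\{1_x\}$ by
\[
\{\, u \in \Hom{\cat{C}}{y}{x} \mid u\alpha = 1_x \,\},
\]
which is exactly the set of retractions (left inverses) of $\alpha$.

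I would then invoke the hypothesis directly. By the previous display this set is empty precisely when $\alpha$ admits no retraction. Since $\alpha$ is not an isomorphism it cannot possess both a left and a right inverse: from $u\alpha = 1_x$ and $\alpha s = 1_y$ one gets $u = u(\alpha s) = (u\alpha)s = s$, a two-sided inverse. Thus $\alpha$ lacks at least one one-sided inverse, and the reduction in the statement lets us assume it is the retraction that is absent (the missing-section case is the $\op{\cat{C}}$-dual and is treated symmetrically). Under this hypothesis the displayed set is empty, so the walk-action of $\beta$ on $1_x$ is already empty after its first step.

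Finally, the action of the remaining steps of $\beta$ is obtained by alternately taking images and preimages of the current set along $\alpha$, and both operations send $\emptyset$ to $\emptyset$; the empty set therefore propagates through all remaining steps. This yields $1_x \cdot \beta = \emptyset$ uniformly for every $t \geq 1$, so that the exponent plays no essential role beyond forcing at least one traversal.

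The step I expect to be the main obstacle is the directional bookkeeping: one must pin down precisely which traversals of $\alpha$ are computed as images and which as preimages, and verify that the single preimage that matters is exactly the set of left inverses of $\alpha$. The only other point needing care is the reduction ``without loss of generality'': one should record that a non-isomorphism must fail to admit a left or a right inverse, and that the missing-section case is the $\op{\cat{C}}$-dual of the missing-retraction case treated here, so that handling the latter suffices.
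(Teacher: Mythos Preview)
Your proposal is correct and follows essentially the same route as the paper: both arguments identify that the first step of the right walk-action on $1_x$ produces the preimage $\{\,\lambda \in \Hom{\cat{C}}{y}{x} : \lambda\alpha = 1_x\,\}$, which is empty precisely because $\alpha$ admits no left inverse. The paper phrases this by contradiction (assuming $1_x\cdot\beta\neq\emptyset$ and extracting such a $\lambda$), whereas you compute directly and then let the empty set propagate; you also spell out the ``without loss of generality'' reduction more carefully than the paper does, but the substance is identical.
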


\begin{proof}
	Suppose that $1_{x} \cdot \beta \neq \emptyset$. Without loss of generality, this implies that $1_{x} \cdot (\op{\alpha} \alpha) \neq \emptyset$, so we take an element $l \in 1_{x} \cdot (\op{\alpha} \alpha) = (\op{\alpha} \alpha)^{*}(1_{x}) = \alpha^{*}((\op{\alpha})^{*}(1_{x}))$. Since $(\op{\alpha})^{*}(1_{x})$ is a nonempty set, there exists $\lambda \in (\op{\alpha})^{*}(1_{x})$, so that $\alpha^{*}(\lambda) = \lambda \alpha = 1_{x}$, which contradicts the fact that $\alpha$ is not a retraction.
\end{proof}

\begin{lem}
	Let $x$ and $y$ be objects of a category $\cat{C}$, and let $\alpha: x \longrightarrow y$ be a morphism in $\cat{C}$ such that it has no left inverse. Consider the subcategory \begin{tikzcd}[cramped] \cat{D} = x \arrow{r}{\alpha} & y \end{tikzcd} of $\cat{C}$, and the $\cat{D}$-sets $\Omega_n$ defined as in Example \ref{Omega_n} or Example~\ref{Omega_n2}. Then there exists a natural transformation from $\Omega_n$ to $ri(\Omega_n)$, which is injective on each component. 
\end{lem}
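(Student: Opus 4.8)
The plan is to take $\eta$ to be the unit natural transformation provided by Corollary \ref{adj2}, so that for each $z \in \obj{\cat{D}}$ and each $a \in \Omega_n(z)$ one has $\eta_z(a) = [1_z, a]$, and then to show that every component of $\eta$ is injective. Since restriction is just evaluation on the objects of $\cat{D}$, for $w \in \obj{\cat{C}}$ the set $ri(\Omega_n)(w) = i(\Omega_n)(w)$ is the quotient
\[
\Bigl( \Hom{\cat{C}}{x}{w} \times \Omega_n(x) \;\sqcup\; \Hom{\cat{C}}{y}{w} \times \Omega_n(y) \Bigr) \big/ \sim ,
\]
so $\eta_x$ and $\eta_y$ are the maps $a \mapsto [1_x, a]$ and $1 \mapsto [1_y, 1]$. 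Because $\Omega_n(y) = \underline{1}$ is a singleton, $\eta_y$ is automatically injective, and the whole content of the lemma lies in $\eta_x$.

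First I would record that the relation $\sim$ used in the biset composition is an equivalence relation: reflexivity comes from the trivial walk, symmetry from passing to the opposite walk $\op{\beta}$, and transitivity from concatenating walks together with the induced composition of relations on $\Omega_n$. Consequently $[1_x, a] = [1_x, a']$ holds if and only if $(1_x, a) \sim (1_x, a')$ \emph{through a single walk}; as both representatives lie in the $\Hom{\cat{C}}{x}{x}$-summand, this amounts to the existence of a walk $\beta : x \rightsquigarrow x$ in $\cat{D}$ with $1_x \in 1_x \cdot \beta$ and $a' \in \beta \cdot a$.

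Next I would enumerate the walks $x \rightsquigarrow x$ inside $\cat{D} = (x \xrightarrow{\alpha} y)$. Since the only non-identity morphism is $\alpha$ and the only morphisms into $x$ (respectively out of $y$) are identities, every such walk is, up to insertion of identities, of the form $\beta = (\op{\alpha}\alpha)^{t}$ with $t \geq 0$. For the trivial walk ($t = 0$) the condition $a' \in \beta \cdot a = \{a\}$ forces $a' = a$. For $t \geq 1$, the hypothesis that $\alpha$ has no left inverse is exactly the hypothesis of Lemma \ref{lemw}, which yields $1_x \cdot \beta = \emptyset$; hence $1_x \in 1_x \cdot \beta$ is impossible and no nontrivial walk can relate $(1_x, a)$ to $(1_x, a')$. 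Therefore $a = a'$, and $\eta_x$ is injective. In the case where $\cat{D}$ is the one-object loop of Example \ref{Omega_n2} the same conclusion follows by the same mechanism: a nontrivial reduced walk $x \rightsquigarrow x$ would, through its first backward step, force a power of $\alpha$ to admit a left inverse, and hence $\alpha$ itself to admit one, which is excluded.

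The step I expect to be the main obstacle is the bookkeeping in the second paragraph: confirming that $\sim$ is genuinely transitive, so that an identification $[1_x, a] = [1_x, a']$ — which a priori could be witnessed by a chain passing through elements of the $\Hom{\cat{C}}{y}{x}$-summand — collapses to a single walk $x \rightsquigarrow x$ whose first-coordinate condition is precisely the $1_x \cdot \beta$ appearing in Lemma \ref{lemw}. Once this reduction is secured, the remainder is a direct application of Lemma \ref{lemw} together with the short, complete list of walks $x \rightsquigarrow x$ available in the two-morphism category $\cat{D}$.
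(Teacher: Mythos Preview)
Your proposal is correct and follows essentially the same route as the paper: take $\eta$ to be the unit of the adjunction from Corollary~\ref{adj2}, reduce to injectivity of $\eta_x$, list the possible walks $x \rightsquigarrow x$ in $\cat{D}$, and kill the nontrivial ones via Lemma~\ref{lemw}. Your extra bookkeeping about transitivity of $\sim$ and the explicit treatment of the one-object case of Example~\ref{Omega_n2} are refinements the paper glosses over, but they do not change the argument.
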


\begin{proof}
%
%
%

	Corollary \ref{adj2} gives us the existence of this natural transformation which, indeed, is the unit of the adjunction given in Theorem \ref{adj}. We call it $\eta$. It is enough to prove that $\eta_x$ is injective.
	
	Assume that $\eta_x(a) = \eta_x(b)$, which means that $[1_x, a] = [1_x, b]$. By the definition of the equivalence relation in the biset composition, there exists a walk $\beta$ in $\cat{D}$ from $x$ to $x$ such that $1_x \in 1_x \cdot \beta$ and $b \in \beta \cdot a$.
	
	Given the structure of the category $\cat{D}$, there are only two possibilities for the walk $\beta$: either $\beta$ is the identity morphism or it is of the form $(\op{\alpha}\alpha)^t$ for some $t \geq 1$. Suppose $\beta$ is not the identity; then, by Lemma~\ref{lemw}, $\beta$ must be one of the walks described therein, and in particular, $1_x \cdot \beta = \emptyset$, which is a contradiction.
	
	Therefore, $\beta$ must be the identity morphism. In this case, $b \in \beta \cdot a$ implies $b = \beta \cdot a = \mathrm{id} \cdot a = a$. Hence, $\eta_x$ is injective.
\end{proof}

We now proceed with the proof of the Theorem \ref{t1}.

\begin{proof}
	\begin{itemize}
		\item[] $i) \implies ii)$ As $\cat{C}$ is a finite category, there exists a finite number of simple $\cat{C}$-sets, up to isomorphism. As in a semisimple category any $\cat{C}$-set is simple if and only if it is indecomposable, this implies that there is a finite number of indecomposable objects. Hence, $\cat{C}$ is of finite type.
		
		\item[]  $ii) \implies iii)$ 
		Suppose that $\cat{C}$ is of finite type. Consider two objects $x, y \in \obj{\cat{C}}$ and a morphism $\alpha: x \longrightarrow y$ such that is not an isomorphism. Assume, without loss of generality, that $\alpha$ has no left inverse. We then define the subcategory \begin{tikzcd}[cramped] 	
			\cat{D} = x \arrow{r}{\alpha} & y
		\end{tikzcd} and let $\Psi_{1},\Psi_{2},...,\Psi_{k}$ be all the indecomposable $\cat{C}$-sets, and consider $\Omega_{n}$ the $\cat{D}$-sets defined for the quiver in the statements above (Example \ref{Omega_n} or Example  \ref{Omega_n2} respectively). Note that $\Omega_{n}$ is embedded in $ri(\Omega_n)$ and it is isomorphic to the restriction $ri(\Omega_n)|_{\cat{D}}$. As $ri(\Omega_n)$ is a $\cat{C}$-set, it follows that 
		\[
			ri(\Omega_n) \cong \Psi_{1}^{a_{1}} \sqcup \Psi_{2}^{a_{2}} \sqcup \cdots \sqcup \Psi_{k}^{a_{k}}.
		\]
		It also satisfies that
		\[
			\Omega_n \cong ri(\Omega_n)\vert_{\cat{D}} \cong \left(\Psi_{1}|_{\cat{D}} \right)^{a_{1}} \sqcup \left(\Psi_{2}|_{\cat{D}} \right)^{a_{2}} \sqcup \cdots \sqcup \left(\Psi_{k}|_{\cat{D}} \right)^{a_{k}}.
		\]
		
		As each $\Psi_{i}|_{\cat{D}}$ can be divided into indecomposable $\cat{D}$-sets, it follows that \-$\Psi_{i}|_{\cat{D}}= \bigsqcup_{j=1}^{m}\Psi_{ij}$, and with this, $\Omega_{n}$ is then embedded in $\bigsqcup_{i,j} (\Psi_{ij})^{a_{i}}$. In particular, as $\Omega_{n}$ is indecomposable, there exists a pair of indices $i_{n},j_{n}$ such that 
		\[
		\begin{tikzcd}
			\Omega_n \arrow[hook]{r} & \Psi_{i_n, j_n} \arrow[hook]{r} & \Psi_{i_n}.
		\end{tikzcd}
		\]
		
		Since the number of indecomposable $\cat{C}$-sets is finite, there must exist one whose restriction to $\cat{D}$ has maximal size. Let $m$ denote this maximal size. Then it follows that
		\[
		\begin{tikzcd}
			\Omega_{m + 1} \arrow[hook]{r} & \Psi_{i_{m + 1}},
		\end{tikzcd}
		\]
		which leads to a contradiction. Then there is no morphism in $\cat{C}$ between two objects that is not an isomorphism. Since the category $\cat{C}$ is equivalent to its skeleton --and the skeleton is a groupoid--, we conclude that $\cat{C}$ is a groupoid.

		\item[]  $iii) \implies vi)$
		To verify this point, it is enough to show that for every $x \in \obj{\cat{C}}$ and every $a \in \Omega(x)$, the equality $(\gen{\cat{C}}{\Omega}{a})(y) = (\orbit{\cat{C}}{\Omega}{a})(y)$ holds for every $y \in \obj{\cat{C}}$. The inclusion in one direction is clear, since $\gen{\cat{C}}{\Omega}{a}$ is a $\cat{C}$-subset of $\orbit{\cat{C}}{\Omega}{a}$. 
		
		Given an element $b \in (\orbit{\cat{C}}{\Omega}{a})(y)$, there exists a walk $\beta: x \rightsquigarrow y$ such that $b = \beta \cdot a$. Since $\cat{C}$ is a groupoid, $\beta$ can be considered just as a morphism in $\cat{C}$. It follows that $b = \beta \cdot a \in (\gen{\cat{C}}{\Omega}{a})(y)$. Therefore $(\orbit{\cat{C}}{\Omega}{a})(y) \subseteq (\gen{\cat{C}}{\Omega}{a})(y)$ and this happens for every $y \in \obj{\cat{C}}$. Hence, the equality holds.

		\item[]  $iv) \implies i)$  Let $\Omega$ be a $\cat{C}$-set. Then $\Omega = \bigsqcup_{i=1}^{n} \Omega_i$, where each $\Omega_i$ is an indecomposable $\cat{C}$-set. For every $x \in \obj{\cat{C}}$ and every element $u \in \Omega_i(x)$, we have $\Omega_i = \orbit{\cat{C}}{\Omega_i}{u}$. By hypothesis, $\Omega_i = \gen{\cat{C}}{\Omega_i}{u}$, for all $x \in \obj{\cat{C}}$ and all $u \in \Omega_i(x)$. Hence, by Proposition~\ref{simples}, $\Omega_i$ is a simple $\cat{C}$-set. This implies that $\cat{C}$ is a semisimple category. \qedhere
	\end{itemize}
\end{proof}

\begin{cor}
	Given a category $\cat{C}$, its Burnside ring has finite rank if and only if $\cat{C}$ is a groupoid. 
\end{cor}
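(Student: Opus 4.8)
The plan is to reduce the statement to Theorem~\ref{t1} by combining two observations: that the rank of $B(\cat{C})$ measures exactly the finite type condition, and that both ``finite type'' and ``being a groupoid'' can be checked one connected component at a time. Throughout I use the standing assumption that $\cat{C}$ is finite.

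First, I would invoke the cited basis theorem for the Burnside ring: $B(\cat{C})$ is a free $\ZZ$-module whose $\ZZ$-basis is the set of isomorphism classes of indecomposable $\cat{C}$-sets. Consequently the rank of $B(\cat{C})$ equals the number of such isomorphism classes, so $B(\cat{C})$ has finite rank if and only if $\cat{C}$ is of finite type. This reduces the corollary to the assertion that $\cat{C}$ is of finite type if and only if it is a groupoid.

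Second, since Theorem~\ref{t1} applies only to \emph{connected} categories, I would pass to connected components. Writing $\cat{C} = \cat{C}^1 \sqcup \cdots \sqcup \cat{C}^r$ for its decomposition into connected components (finite, as $\cat{C}$ is finite), every $\cat{C}$-set is a disjoint union of $\cat{C}$-sets each supported on a single component. Hence an indecomposable $\cat{C}$-set is necessarily supported on exactly one $\cat{C}^i$, where it restricts to an indecomposable $\cat{C}^i$-set, and conversely each indecomposable $\cat{C}^i$-set extends by the empty set elsewhere to an indecomposable $\cat{C}$-set. This gives a bijection between isomorphism classes of indecomposable $\cat{C}$-sets and the disjoint union over $i$ of isomorphism classes of indecomposable $\cat{C}^i$-sets, so $\cat{C}$ is of finite type if and only if every $\cat{C}^i$ is. Likewise $\cat{C}$ is a groupoid if and only if each $\cat{C}^i$ is, since invertibility is a condition on morphisms and no morphism joins distinct components. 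Applying the equivalence $ii) \iff iii)$ of Theorem~\ref{t1} to each finite connected $\cat{C}^i$, I would then chain the equivalences: $B(\cat{C})$ has finite rank $\iff$ $\cat{C}$ is of finite type $\iff$ every $\cat{C}^i$ is of finite type $\iff$ every $\cat{C}^i$ is a groupoid $\iff$ $\cat{C}$ is a groupoid.

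The main obstacle is the component-wise reduction in the second step: one must check carefully that indecomposability interacts correctly with the splitting into connected components, so that the count of indecomposable objects is genuinely additive over components and the property of being a groupoid is detected component-wise. Once that bookkeeping is in place, the remainder is a direct translation through the basis theorem for $B(\cat{C})$ together with the already-established Theorem~\ref{t1}.
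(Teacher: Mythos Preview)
Your argument is correct and matches the paper's: use the $\ZZ$-basis theorem for $B(\cat{C})$ to identify finite rank with finite type, then invoke the equivalence $ii)\iff iii)$ of Theorem~\ref{t1}. Your explicit component-wise reduction is exactly what the paper handles by the standing remark at the start of Section~3, namely that one may restrict to connected categories because the Burnside ring of a disconnected category is the product of those of its components.
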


This follows directly from the definition of the Burnside ring and the fact that the set of the isomorphism classes of the indecomposable objects forms a basis for the ring.

This work indicates that the Burnside ring of a finite category with infinite rank remains largely unexplored, with several fundamental questions still open. Among these are: What is the structure of its spectrum? What are the idempotent elements of the Burnside ring? How can simple $\cat{C}$-sets be characterized? It is also natural to ask whether the Burnside ring can characterize the category $\cat{C}$ in the general (non-groupoid) case. These and other questions point to a promising direction for further research.

\section*{Acknowledgements}

The authors would like to thank Peter Webb, who visited Mexico in 2023 to enthusiastically talk about his preprint \cite{Webb23} and encouraged us to continue working on these topics. We also thank Omar Antol\'in for his valuable comments in one of our examples.


\bibliographystyle{alpha}
\bibliography{bibliografia}

\end{document}